\newtheorem{thm}{Theorem}
\newcommand{\bd}{{\rm bd}}
\newcommand{\diam}{{\rm diam}}
\newcommand{\conv}{{ \rm conv}}
\newcommand{\width}{{\rm width}}
\newcommand{\dist}{{\rm dist}}
\newcommand{\sh}{{\rm sinh \,}}
\newcommand{\ch}{{\rm cosh \,}}
\newcommand{\arsh}{{\rm arcsinh \,}}
\title{\bf Applications of equidistant supporting surfaces of a convex body in the hyperbolic space}
\date{}
\begin{document}

\maketitle

\thispagestyle{empty}

\vskip-1cm

\centerline
{\author{MAREK LASSAK}}

\pagestyle{myheadings} \markboth{\centerline {Marek Lassak}}{\centerline {Applications of equidistant supporting surfaces in the hyperbolic space}}

\baselineskip 17pt 

\maketitle
\vskip 0.6cm

\noindent 
{\bf Abstract}.
For a hyperplane $H$ supporting a convex body $C$ in the hyperbolic space $\mathbb{H}^d$ we define the width of $C$ determined by $H$ as the distance between $H$ and a most distant ultraparallel hyperplane supporting $C$. 
The thickness (i.e., the minimum width) of $C$ is denoted by $\Delta(C)$. 
A convex body $R \subset \mathbb{H}^d$ is called reduced if for every body $Z \subsetneq R$ we have $\Delta(Z) < \Delta(R)$.
We show that for any extreme point $e$ of a reduced body $R \subset \mathbb{H}^d$ there exists a supporting hyperplane $H$ of $R$ which passes through $e$ or its equidistant surface supporting $R$ passes through $e$.
Bodies of constant width in $\mathbb{H}^d$ are defined as bodies all whose widths are equal.
We prove that every complete body in $\mathbb{H}^d$ is a body of constant width.

\baselineskip 19pt 

\vskip0.2cm
\noindent 
{\bf Mathematical Subject Classification (2010).} 52A55. 

\vskip0.2cm
\noindent
{\bf Keywords.} Hyperbolic geometry, equidistant surface, convex body, width, constant width, thickness, reduced body, 
complete body. 

\vskip0.5cm

\date{}

\maketitle

\section{Introduction}

Let $H$ be a hyperplane supporting a convex body $C$ in the hyperbolic space $\mathbb{H}^d$. 
We define the {\bf width of $C$ determined by $H$} as the distance between $H$ and any farthest ultraparallel hyperplane supporting $C$ (see \cite{[L23]}).
Since $C$ is compact, there exists at least one such a most distant hyperplane (sometimes there are a finitely or even infinitely many of them). 
The symbol ${\rm width}_H (C)$ denotes this width of $C$ determined by $H$.

By the {\bf thickness $\Delta (C)$ of} a convex body $C \subset \mathbb{H}^d$ we mean the infimum of ${\rm width}_H (C)$ over all hyperplanes $H$ supporting $C$.
By compactness arguments, this infimum is realized, so $\Delta (C)$ is the minimum of the numbers ${\rm width}_H (C)$.

Recall that an {\bf equidistant surface to} $H$ is the set of all points in a fixed positive distance which are in one half-space bounded by $H$.
By the {\bf equidistant strip} generated by a hyperplane $H$ and an equidistant surface $E$ to it we mean the set $\conv (H \cup E)$, where the symbol ``$\conv$" means creating the convex hull.
The {\bf thickness} of this equidistant strip is defined as the distance between $H$ and $E$.

Clearly, for a convex body $C \subset \mathbb{H}^d$ and a supporting hyperplane $H$ of $C$ there exists a unique nearest equidistant surface $E_H(C)$ to $H$ such that $C$ is a subset of the equidistant strip being the convex hull of $H \cup E_H(C)$. 
We say that $E_H(C)$ {\bf supports} $C$.

A convex body $R \subset \mathbb{H}^d$ is called {\bf reduced} if for every convex body $Z$ properly contained in $R$
we have $\Delta(Z) < \Delta(R)$. 
This notion for $\mathbb{H}^d$ is introduced in \cite{[L23]} in analogy to the notion of a reduced body in the Euclidean $d$-dimensional space $\mathbb{E}^d$ introduced by Heil in \cite{[He]}.
Examples of reduced bodies in $\mathbb{H}^d$ are bodies of constant width, as shown in Proposition 4 of \cite{[L23]}.
Some reduced polygons in $\mathbb{H}^2$ are presented in \cite{[L24]} (in particular, the regular odd-gons are reduced).

The first aim of this note is to prove that for any reduced body $R \subset \mathbb{H}^d$ and an arbitrary extreme point $e \in R$ there exists a supporting hyperplane $H$ of $R$ such that it supports $R$ at $e$ or its equidistant surface supports $R$ at $e$.

If for every hyperplane supporting a convex body $W \subset \mathbb{H}^d$ the width of $W$ determined by this hyperplane is $\delta$, we say that $W$ is a {\bf body of constant width} $\delta$. 

Similarly to the traditional notion of a complete set in $\mathbb{E}^d$ (for instance, see the books \cite{[BF]} by Bonnesen and Fenchel, \cite{[CG]} by Chakerian and Groemer and \cite{[Eg]} by Eggleston) we say that a set $C \in \mathbb{H}^d$ of diameter $\delta$ is {\bf complete} provided $\diam (C \cup \{x\}) > \delta$ for every $x \not \in C.$

The second aim is to prove that every complete body of diameter $\delta$ in the hyperbolic space $\mathbb{H}^d$ is a body of constant width~$\delta$.
The proof of this result also uses the concept of supporting equidistant surface.

\section{Supporting at extreme points of a reduced body}

The following theorem is an analog of Theorem 1 of \cite{[L90]} that through every extreme point of a reduced body $R$ in $\mathbb{E}^d$ a supporting hyperplane $H$ passes such that $\width_H(R) = \Delta(R)$.
This analogy is not literal since for some extreme points $e$ of some reduced bodies $R \subset \mathbb{H}^d$ no supporting hyperplane $H$ with $\Delta_H(R) = \Delta(R)$ passes through $e$.
An example of such an $R$ is given after the following theorem for $e^+$ as the extreme point (the author is not able to find another such example).

Let us recall the following fact which is applied in the below proof.

Proposition 2 of \cite{[L23]}.
{\it Let $C \subset \mathbb{H}^d$ be a convex body and $H$ any supporting hyperplane of $C$.
Then ${\rm width}_H (C)$ equals to the distance between $H$ and the nearest equidistant surface $E$ to $H$ such that $C$ is a subset of the equidistant strip being the convex hull of $H \cup E$.}

\begin{thm} \label{extreme point}
Let $e$ be an extreme point of a reduced body $R \subset \mathbb{H}^d$.
Then there exists a supporting hyperplane $H$ of $R$ such that $H$ or $E_H(R)$ supports $R$ at $e$.
The thickness of the equidistant strip $\conv (H \cup E_H(R))$ is $\Delta(R)$.
\end{thm}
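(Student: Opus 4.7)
The plan is to argue by contradiction. Assume that no supporting hyperplane $H$ of $R$ with $\width_H(R)=\Delta(R)$ satisfies $e\in H$ or $e\in E_H(R)$. I will construct a proper convex subbody $Z\subsetneq R$ with $\Delta(Z)\geq\Delta(R)$, contradicting the reducedness of $R$.

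Set $\mathcal{H}^{*}=\{H:H\text{ supports }R,\ \width_H(R)=\Delta(R)\}$. By continuity of the map $H\mapsto\width_H(R)$ and compactness of the space of supporting hyperplanes of $R$, the set $\mathcal{H}^{*}$ is nonempty and compact. Under the standing assumption, $\dist(e,H)>0$ and $\dist(e,E_H(R))>0$ for every $H\in\mathcal{H}^{*}$, and compactness then delivers a radius $\rho>0$ and a margin $\eta>0$ such that (i) no $H\in\mathcal{H}^{*}$ and no $E_H(R)$ with $H\in\mathcal{H}^{*}$ meets the closed hyperbolic ball $\bar{B}=\bar{B}(e,\rho)$, and (ii) every supporting hyperplane $K$ of $R$ whose contact with $R$ meets $\bar{B}$ satisfies $\width_K(R)\geq\Delta(R)+\eta$; the latter is a compactness argument, for otherwise a subsequential limit would produce a member of $\mathcal{H}^{*}$ touching $R$ inside $\bar{B}$, violating (i).

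Fix a supporting hyperplane $H_e$ of $R$ at $e$ and slice off a small cap: let $H'$ be a hyperplane on the $R$-side of $H_e$ at distance $\delta>0$ from $H_e$, and let $Z$ be the intersection of $R$ with the closed half-space bounded by $H'$ and not containing $e$. For sufficiently small $\delta$, $R\setminus Z\subset\bar{B}$, so $Z\subsetneq R$ is a convex body. For any supporting hyperplane $K$ of $Z$ I distinguish three cases. (a) $K$ supports $R$ with $K\in\mathcal{H}^{*}$: the strip $\conv(K\cup E_K(R))$ of thickness $\Delta(R)$ contains $Z$ and its contact points with $R$ lie outside $\bar{B}$, hence in $Z$, so Proposition 2 of \cite{[L23]} yields $\width_K(Z)=\Delta(R)$. (b) $K$ supports $R$ with contact outside $\bar{B}$ and $K\notin\mathcal{H}^{*}$: a uniform margin $\width_K(R)\geq\Delta(R)+\eta'$ from compactness, combined with a perturbation bound showing that removing a cap of hyperbolic diameter $O(\delta)$ decreases $\width_K$ by less than $\eta'$, forces $\width_K(Z)\geq\Delta(R)$ for $\delta$ small. (c) $K$ supports $Z$ at a point inside $\bar{B}$ (including the cut hyperplane $H'$ and the rim-type supports along $H'\cap\bd R$): then $K$ is close in the space of hyperplanes to a supporting hyperplane of $R$ at a point of $\bar{B}$, and the margin $\eta$ from (ii) together with continuity of the width give $\width_K(Z)\geq\Delta(R)$.

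Cases (a)--(c) together give $\Delta(Z)\geq\Delta(R)$, contradicting reducedness. Hence some $H\in\mathcal{H}^{*}$ satisfies $e\in H$ or $e\in E_H(R)$, and since $H\in\mathcal{H}^{*}$ the strip $\conv(H\cup E_H(R))$ automatically has thickness $\width_H(R)=\Delta(R)$, which is the last sentence of the theorem. The principal obstacle is the perturbation estimates used in cases (b) and (c): one has to bound how the supporting equidistant surface $E_K(R)$ is displaced when a cap of hyperbolic diameter $O(\delta)$ is removed from $R$, and how $\width_K(Z)$ depends continuously on both $K$ and the body. The hyperbolic trigonometry of equidistant surfaces makes these bounds more delicate than the corresponding Euclidean computation in the proof of Theorem 1 of \cite{[L90]}, though the overall structure of the argument is the same.
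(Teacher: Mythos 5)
Your overall strategy---use reducedness to show that a proper subbody $Z$ obtained by shaving $R$ near $e$ would satisfy $\Delta(Z)\ge\Delta(R)$ unless some minimal-width strip comes close to $e$---is the same idea as the paper's, but you run it in the contrapositive direction, and that direction requires uniform estimates that your write-up does not actually secure. The concrete failure is in case (b): the set of supporting hyperplanes $K$ of $R$ with $K\notin\mathcal{H}^{*}$ and contact outside $\bar B$ is not closed, since its closure contains $\mathcal{H}^{*}$ itself (tilt a minimal-width hyperplane slightly; generically it leaves $\mathcal{H}^{*}$ while its contact stays outside $\bar B$), so $\inf_K \width_K(R)$ over that set equals $\Delta(R)$ and no uniform margin $\eta'>0$ exists. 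The case division should be governed not by membership in $\mathcal{H}^{*}$, nor by where $K$ touches $R$, but by where $E_K(R)$ touches $R$, i.e.\ where the farthest points of $R$ from $K$ lie: if some farthest point survives into $Z$, then $\width_K(Z)=\width_K(R)\ge\Delta(R)$ and no margin is needed at all; if every farthest point lies in the removed cap, then $E_K(R)$ meets $\bar B$, the set of such $K$ is closed by upper semicontinuity of the farthest-point map, and only on that set do compactness and your hypothesis (i) give a genuine margin. A second, smaller gap: slicing with a hyperplane $H'$ at distance $\delta$ from a supporting hyperplane $H_e$ at $e$ gives $R\setminus Z\subset\bar B$ only when $e$ is an \emph{exposed} point; an extreme point need not be exposed, and for a non-exposed $e$ the cap $\{x\in R:\dist(x,H_e)<\delta\}$ shrinks to the face $H_e\cap R$, not to $\{e\}$. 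Taking $Z=\conv\bigl(R\setminus B(e,\rho)\bigr)$, which is a proper convex subbody for every extreme point $e$, removes this problem.

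For comparison, the paper avoids all of these uniformity issues by running the argument forward: it sets $R_i=\conv(R\setminus B_i(e))$ with radii tending to $0$, uses reducedness to obtain for each $i$ a single strip $\conv(H_i\cup E_{H_i}(R_i))$ of thickness $\Delta(R_i)<\Delta(R)$, observes that such a strip cannot contain all of $R$ and hence $H_i$ or $E_{H_i}(R_i)$ must pass through $B_i(e)$, and then extracts a convergent subsequence of the $H_i$. Only one sequence of hyperplanes has to be controlled, and no perturbation bound on $\width_K$ or on the displacement of $E_K(R)$ is required. Your cases (b) and (c) are exactly where the extra analytic work accumulates; if you repair (b) as indicated and make the continuity claims in (c) precise (upper semicontinuity in $K$ of the contact sets of $K$ and of $E_K(R)$, and continuity of the width under removal of a shrinking cap), the contradiction argument can be made to work, but the limit argument is shorter and cleaner.
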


\begin{proof}
Take the open ball $B_i(e)$ of radius $\Delta(R)/i$ centered at $e$ for $i=2,3, \dots$.
Then $R_i = \conv (R \setminus B_i)$ is a convex body properly contained in $R$.
Since $R$ is reduced, we have $\Delta (R_i) < \Delta (R)$ for $i= 2,3, \dots$.
Applying the just recalled Proposition 2 of \cite{[L23]} and having in mind our definition of the thickness of the equidistant strip we conclude that there exists a supporting hyperplane $H_i$ of $R_i$ and its $E_{H_i}(R_i)$ such that the thickness of the equidistant strip $\conv (H_i \cup E_{H_i}(R_i))$ equals $\Delta (R_i)$.
For every $i \in \{2,3,\dots\}$ there are two possibilities:

(1) when $H_i$ supports $R_i$ at a boundary point of it in $B_i(e)$,

(2) when $H_i$ does not supports $R_i$ at a boundary point of it in $B_i(e)$.

There exist infinitely many $H_i$ in the sequence $H_2, H_3, \dots$ which fulfill (1) or infinitely many $H_i$ in this sequence which fulfill (2).

Case 1. By compactness arguments, if there are infinitely many $H_i$ which fulfill (1), then there is a subsequence of the sequence $H_2, H_3 \dots$ whose limit is a hyperplane $H$ supporting $R$ at $e$.
Since the thickness of $\conv (H_i \cup E_{H_i}(R_i))$ is $\Delta(R_i)$, we conclude that the thickness of the equidistant strip $\conv (H \cup E_H(R))$ is $\Delta (R)$.

Case 2. Analogously, when there are infinitely many $H_i$ which fulfill (2), there is a subsequence of this sequence whose limit is a hyperplane $H$ supporting $R$ and the limit of $E_{H_i}$ is $E_H(R)$.
Since $\width_{H_i}(R_i) = \Delta(R_i) < \Delta (R)$, we see that every $E_{H_i}(R_i))$ from our subsequence passes through $B_i$. 
Since for every $i$ from our subsequence the equidistant surface $E_{H_i}$ intersects $B_i$, we conclude that $E_H(R)$ passes through $e$.
We see that again the thickness of the strip $\conv (H \cup {H_i}(R_i))$ is $\Delta (R)$.

From the two cases we conclude the thesis of our theorem.
\end{proof}

Let us show examples of the cases from this proof.

The author learned about a reduced rhombus from K. Jr. B\"or\"oczky, A. Freyer and \'A. Sagmeister, who are preparing a paper on reduced bodies in $\mathbb{H}^2$. 
We mean the convex hull $P$ of two perpendicular segments intersecting each other at the common midpoint $m$ such that the longer one is sufficiently long with respect to the shorter one. 
Denote by $e^-$ an end-point of the shorter segment $S^-$ and by $e^+$ an end-point of the longer segment $S^+$.
Consider the extreme point $e^-$ of $P$.
Every $H_i$ passes through $B_i(e^-)$ and $E_{H_i}(P)$ passes through $e^+$.
For the straight line $H$ through $e^-$ orthogonal to $S^-$ we have $\width_H(P) = \dist(e^+,H) = \Delta(P)$, which means that we have Case 1 (but not Case 2).
Take into account the extreme point $e+$ of $P$.
Now every $H_i$ passes through $e^-$ and $E_{H_i}(P)$ passes through $B_i(e^+)$.
We have $\width_H(P) = \dist(e^+,H) = \Delta(P)$.
Thus $H$ supports $P$ at $e^-$ and $E_H(P)$ passes through $e^+$, which means that we have Case 2 (but not Case 1).
An analogous situation in $\mathbb{H}^d$ is for a sufficiently long crosspolytope, i.e., the convex hull of $d$ perpendicular segments intersecting each other at midpoints.
Again we take $e^-$ as an end-point of a shortest segment and $e^+$ as an end-point of a longest segment.
By the way, the reader can easily check that if the shorter one has length $\lambda$, then the longer should be of length over $\arsh(\sh \lambda \cdot \ch 2\lambda)$ in order the rhombus, and so the crosspolytope, be reduced.
Hint: apply the Lambert quadrilateral $e^-me^+p$, where $p$ is the projection of $e^+$ on $H$.
Let us add that we have both cases simultaneously for any body of constant width in $\mathbb{H}^d$ and any of its boundary point as $e$.

\section{Complete bodies are of constant width}

The content of this section appeared in version 3 of the arXiv preprint of [12].
In version 4 it was removed, in order to make that version the same as the one in journal.

The following fact is applied in the proofs of the forthcoming Claim and Lemma, both needed for the proof of Theorem 2.

Claim 4 from \cite{[L23]}. 
{\it  Every complete body $C$ of a diameter $\delta$ in $\mathbb{H}^d$ coincides with the intersection of all balls of radius $\delta$ centered at points of $C$.}

\vskip0.25cm
\noindent
{\bf Claim.}
{\it Let $C \subset \mathbb{H}^d$ be a complete body and let $H$ be a supporting hyperplane of $C$.
Then the equidistant surface $E_H(C)$ contains exactly one point of $C$.}

\begin{proof}
Imagine the opposite.
Then at least two points $r_1$ and $r_2$ of $E_H(C)$ belong to $C$.
By the just recalled Claim 4 of \cite{[L23]} our complete body $C$, whose diameter denote by $\delta$, is a subset of the intersection of all balls of radius $\delta$ centered at points of $C$.
Hence $C$ is a subset of the intersection of the two balls of radius $\delta$ centered at $r_1$ and $r_2$.
Observe that none of points of the intersection of these two balls belongs to $H$.
So $C \cap H = \emptyset$.
A contradiction with the assumption that $H$ supports $C$.
This ends the proof.
\end{proof}

For different points $a, b \in \mathbb{H}^d$ at a distance $\delta$ from a point $c \in \mathbb{H}^d$ define the piece $P_c(a,b)$ of circle as the set of points $g \in \mathbb{H}^d$ such that the segment $cg$ has length $\delta$ and intersects $ab$. 
This notion is similar to a definition from the paper \cite{[Je]} by Jessen and also a definition in the part ${\rm (ii)} \Longrightarrow {\rm (i)}$ of the proof of Theorem 52 of the book \cite{[Eg]} by Eggleston for $\mathbb{E}^d$.

\vskip0.1cm
The proofs of our below Lemma and Theorem 2 are analogous to the proofs for $\mathbb{E}^d$ by Meissner \cite{[Me]} for $d=2,3$ and Jessen \cite{[Je]} for arbitrary $d$.
They are also analogous to the spherical version given in \cite{[L20]} and \cite{[L22]}.

\vskip0.3cm
\noindent
{\bf Lemma.}
{\it Let $C \subset \mathbb{H}^d$ be a complete body of diameter $\delta$. 
Consider $P_c(a,b)$ with $|ac| = |bc| = \delta$, where $a, b \in C$ and $c \in \mathbb{H}^d$.
Then $P_c(a,b) \subset C$.}

\begin{proof}
We start with confirming the thesis for a ball $B$ of radius $\delta$ in place of $C$.
Unique $\mathbb{H}^2 \subset \mathbb{H}^d$ exists such that $a, b, c \in \mathbb{H}^2$.
Let $D= B \cap \mathbb{H}^2$.
Take the circle containing $P_c(a,b)$ and the disk $D'$ bounded by it.
Denote by $a', b'$ the points of the intersection of this circle with the circle bounding $D$.
Clearly, $P_c(a,b) \subset P_c(a',b')$.
Observe that the radius of $D'$ is also $\delta$ and that the distance of the centers of $D$ and $D'$ is at most $\delta$. 
Hence $P_c(a',b') \subset D$. 
Thus $P_c(a,b) \subset P_c(a',b') \subset D \subset B$.

By the preceding paragraph and the recalled earlier Claim 4 from \cite{[L23]} we obtain the thesis of the present lemma.
\end{proof}

Lemma is proved for arbitrary $d$ despite we will apply only its case for $d=2$.

In the proof of our Theorem 2 we apply the following three facts, which are below recalled for the convenience of the reader.
 
Theorem 1 of \cite{[L23]}.
{\it For every convex body $C \subset \mathbb{H}^d$ the maximum of ${\rm width}_H (C)$ over all hyperplanes $H$ supporting $C$ equals to the diameter of $C$.}

Theorem 2 of \cite{[L23]}. {\it Let $C \subset \mathbb{H}^d$ be a convex body and let $H$ be a supporting hyperplane of $C$ such that $\width_H (C) = \Delta (C)$.
Assume that there exists a unique most distant point $j \in C$ from $H$.
Then the projection of $j$ onto $H$ belongs to $H \cap C$.}

Proposition 5 of \cite{[L23]}.
{\it If $C \subset H^d$ is a complete body of diameter $\delta$, then for every $p \in \bd (C)$ there exists $p' \in C$ such that $|pp'|=\delta$.}

\begin{thm}
Every complete body of diameter $\delta$ in $\mathbb{H}^d$ is a body of constant width $\delta$.
\end{thm}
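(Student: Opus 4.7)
The approach is by contradiction. Assume that $C$ is complete of diameter $\delta$ yet fails to be of constant width $\delta$. By Theorem~1 of \cite{[L23]} we have $\max_H \width_H(C) = \delta$, so failure of constant width forces some supporting hyperplane to satisfy $\width_H(C) < \delta$. Using compactness I select such an $H$ minimizing the width, so that $\width_H(C) = \Delta(C) =: w < \delta$.

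By the Claim proved just above, $E_H(C) \cap C$ consists of a single point $j$, and so $j$ is the unique most distant point of $C$ from $H$, at distance $w$. Theorem~2 of \cite{[L23]} then places the foot $p$ of the perpendicular from $j$ to $H$ inside $H \cap C$; in particular $p \in \bd(C)$ and $|pj| = w$. Proposition~5 of \cite{[L23]} applied to $p$ furnishes a diameter partner $p^* \in C$ with $|pp^*| = \delta$. Since $C$ is contained in the equidistant strip of thickness $w$, we get $\dist(p^*, H) \le w < \delta$, and hence (via the hyperbolic Pythagorean identity for the right triangle formed by $p$, $p^*$, and the foot of $p^*$ on $H$) the point $p^*$ lies at some strictly positive horizontal distance from $p$ along $H$.

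The crux is to use the Lemma to exhibit a point of $C$ at distance exceeding $w$ from $H$, contradicting the Claim's identification of $j$ as the unique farthest point. My plan is to work in the totally geodesic plane $\Pi \cong \mathbb{H}^2$ spanned by $p$, $j$, $p^*$: inside $\Pi$, the line $H \cap \Pi$ supports the $2$-dimensional body $C \cap \Pi$, the equidistant curve $E_H(C) \cap \Pi$ still meets $C$ only at $j$, and $p^*$ is a diameter partner of $p$ within $\Pi$. A continuity argument (Proposition~5 applied to boundary points of $C \cap \Pi$ slightly moved off $p$, together with compactness of the ``diameter graph'' $\{(x,y) \in C^2 : |xy| = \delta\}$) should supply a second partner $p^{**} \in C \cap \Pi$ with $|pp^{**}| = \delta$ on the side of the geodesic $pj$ opposite to $p^*$. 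Applying the Lemma with $c = p$, $a = p^*$, $b = p^{**}$ then gives the arc $P_p(p^*, p^{**}) \subset C$ on the hyperbolic sphere of radius $\delta$ about $p$; because $p^*$ and $p^{**}$ lie on opposite sides of $pj$, this arc sweeps across the perpendicular direction to $H$ and contains the point directly above $p$ at hyperbolic distance $\delta$, which sits at distance $\delta$ from $H$ --- contradicting $C \subset$ strip of thickness $w < \delta$.

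The chief obstacle is producing the second partner $p^{**}$ on the correct side of $pj$: Proposition~5 by itself only gives one partner per boundary point, so extracting a second on the opposite side within $\Pi$ requires a topological sweep along $\bd(C \cap \Pi)$, echoing the device in the Meissner--Jessen proof for $\mathbb{E}^d$ to which the preamble alludes. Once $p^{**}$ is secured, the final geometric verification that the arc $P_p(p^*, p^{**})$ crosses the vertical geodesic $pj$ above $j$ is a short planar computation in $\mathbb{H}^2$.
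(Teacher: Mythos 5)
Your opening is sound, and in one respect more careful than the paper itself: passing to a width-minimizing supporting hyperplane $H$ with $\width_H(C)=\Delta(C)<\delta$ is exactly what is needed to legitimately invoke Theorem~2 of \cite{[L23]}, and your identification of the unique farthest point $j$ and of its foot $p\in H\cap C$ matches the intended argument. The proof then breaks down at the step you yourself flag as the chief obstacle: the existence of a second diametral partner $p^{**}$ of $p$ lying in the plane $\Pi$ and on the side of the geodesic $pj$ opposite to $p^*$. Nothing in the available toolkit produces such a point. Proposition~5 of \cite{[L23]} yields one partner per boundary point of $C$, with no control over which side of $pj$ it falls on and, worse, no reason for it to lie in $\Pi$ at all when $d\ge 3$ (a partner of a point of $\bd(C)\cap\Pi$ is only guaranteed to lie in $C$, not in $C\cap\Pi$). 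The proposed sweep along $\bd(C\cap\Pi)$ combined with compactness of the diameter graph can only produce limits of partners of nearby points, i.e.\ again \emph{a} partner of $p$, which may well coincide with $p^*$; and for a typical boundary point of a complete body the diametral partner is unique (consider a ball of radius $\delta/2$), so a second partner of $p$ cannot be expected to exist even under your contradiction hypothesis. Your endgame (the arc $P_p(p^*,p^{**})$ sweeping across the perpendicular at $p$ and hitting a point at distance $\delta$ from $H$) would indeed work if $p^{**}$ were available, but without it the arc is never formed and the contradiction never materializes.

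The paper circumvents this difficulty entirely by applying Proposition~5 of \cite{[L23]} to $j$ rather than to its foot $h$: it takes $j'\in C$ with $|jj'|=\delta$, works in the plane of the nondegenerate triangle $jhj'$, and applies the Lemma with the two points of $C$ being $a=h$ and $b=j'$ and with an \emph{auxiliary external} center $c$ chosen so that $|ch|=|cj'|=\delta$; such a $c$ exists simply because $|hj'|\le\diam(C)=\delta$, so the two circles of radius $\delta$ about $h$ and $j'$ meet. The resulting arc $P_c(h,j')$ must lie in $M=C\cap\mathbb{H}^2$ by the Lemma, yet it leaves $M$ through its boundary, which is the contradiction. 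The moral is that the Lemma does not require the center of the arc to belong to $C$, only its two endpoints; exploiting that freedom requires only a single application of Proposition~5 and removes any need for two diametral partners of one point, which is precisely where your argument gets stuck. To repair your proof, replace the search for $p^{**}$ by this external-center construction.
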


\begin{proof}
Suppose that our thesis does not hold true, i.e., that $\width_H(C) \not = \delta$ for a hyperplane $H$ supporting $C$. 
By Theorem 1 of \cite{[L23]} recalled above, 
we cannot have $\width_H(C) > \delta$.
Consequently, $\width_H(C) < \delta$ and thus, having in mind the definition of the thickness, we obtain $\Delta (C) < \delta$. 
Since $C$ is complete, by our Claim the equidistant surface $E_H(C)$ contains exactly one point of $C$.
Denote it by $j$.
By Theorem 2 of \cite{[L23]} (recalled above) the projection $h$ of $j$ onto $H$ belongs to $C$.
By Proposition 5 of \cite{[L23]} (recalled above) there exists $j' \in C$ at distance $\delta$ from $j$. 
Since the triangle $jhj'$ is non-degenerate (still $|hj| = \width_H(C) < |j'j|$), there is a unique two-dimensional hyperbolic plane $\mathbb{H}^2 \subset \mathbb{H}^d$ containing it.
Clearly, $jhj'$ is a subset of the convex body $M= C \cap \mathbb{H}^2$ of $\mathbb{H}^2$.
Denote by $F$ this half-plane of $\mathbb{H}^2$ for which $hj' \subset \bd(F)$ and $j \in F$.
There exists a unique $c \in F$ such that $|ch| = \delta = |cj'|$.
Since $c$ belongs to the intersection of the two circles of radius $\delta$ centered at $h$ and $j'$, we conclude that $c$ belongs to this half-plane $G$ of $\mathbb{H}^2$ bounded by the straight line $K$ through $h$ and $j$ which contains $j'$.
By Lemma for $d=2$ we get $P_c(h,j') \subset M$.

By $|ch| = \delta$, $|cj'| = \delta$, $|jj'| = \delta$, $c \in F$ and $c \not = j$ we conclude that $c \not \in K$.
Consequently, $c$ belongs to the interior of $G$.
Hence $P_c(h,j')$ intersects the line $\bd(M \cap \mathbb{H}^2)$ at a point $h'$ different from $h$ and $j'$.
So the set $P_c(h,j') \setminus \{h,h'\}$ does not intersect $M$. 
This contradicts $P_c(h,j') \subset M$ established earlier.
Consequently, $C$ is a body of constant width~$\delta$.
\end{proof}

From the above Theorem we see that in Theorem 3 of \cite{[L23]} also the implication $(c) \Longrightarrow (a)$ holds true.
As a consequence, conditions $(a), (b)$ and $(c)$ in the mentioned Theorem 3 of \cite{[L23]} are equivalent.
In other words, complete bodies, bodies of constant width and bodies of constant diameter in $\mathbb{H}^d$ coincide.
This is analogous to the classical fact in Euclidean  and also in spherical space (see \cite{[L20]} and  \cite{[L22]}).

Analogs of Theorem 2 are also proved for some other notions of constant width in $\mathbb{H}^d$, for instance see the papers  \cite{[BS]} by B\"or\"oczky and Sagemeister, and \cite{[De]} by Dekster.

\baselineskip 12pt

\vskip0.2cm
\noindent
Marek Lassak,

\noindent
University of Science and Technology,

\noindent
85-789 Bydgoszcz, Poland,

\noindent
e-mail: lassak@pbs.edu.pl


\begin{thebibliography}{15}

\bibitem{[BF]} 
T. Bonnesen and T. W. Fenchel, {\it Theorie der konvexen K\"orper}. Springer, Berlin (1934). English translation: Theory of Convex Bodies, BCS Associates, Moscow, Idaho, (1987).

\bibitem{[BS]} K. B\"or\"oczky Jr. and \'A. Sagemeister, {\it Convex bodies of constant width in spaces of constant curvature and the extremal area of Reuleaux triangles}, Studia Scient. Math. Hung. {\bf 59} (2022), 244--273. 

\bibitem{[CG]}
G. D. Chakerian and H. Groemer, {\it Convex bodies of constant width}, In Convexity and its applications, pp. 49--96, Birkh\"auser, Basel (1983). 

\bibitem{[De]}
B. V. Dekster, {\it Completness and constant width in spherical and hyperbolic spaces}, Acta Math. Hungar. {\bf 67} (1995), 289--300. 

\bibitem{[Eg]} 
E. G. Eggleston, {\it Convexity}, vol. 47. Cambridge Tracts in Mathematics and Mathematical Physics, Cambridge University Press, New York (1958).  

\bibitem{[Je]} 
B. Jessen, {\it \"Uber konvexe Punktmengen konstanter Breite.} Math. Z. {\bf 29} (1929), 378--380.

\bibitem{[He]} 
E. Heil, {\it Kleinste konvexe K\"orper gegebener Dicke}, Preprint No. 453, Fachbereich Mathematik der TH Darmstadt, (1978).

\bibitem{[L90]} 
M. Lassak, {\it Reduced convex bodies in the plane}, Israel J. Math. {\bf 70} (1990), No. 3, 365--379.

\bibitem{[L15]} 
M. Lassak, {\it Width of spherical convex bodies}, Aequat. Math. {\bf  89} (2015), 555--567.

\bibitem{[L20]} 
M. Lassak, {\it Complete spherical convex bodies}, J. Geom. {\bf 111} (2020), No. 35, 6p.  

\bibitem{[L22]} 
M. Lassak, {\it Spherical Geometry -- A  Survey on Width and Thickness of Convex Bodies} in Surveys in Geometry I. Papadopoulos, Athanase (ed.), Cham: Springer. 7--47 (2022).

\bibitem{[L23]} 
M. Lassak, {\it Width of convex bodies in hyperbolic space}, Results Math. {\bf 79} (2024), article number 111.

\bibitem{[L24]} 
M. Lassak, M.: {\it Reduced polygons in the hyperbolic plane}, Archiv der Math. (to appear), temporarily see arXiv:2401.07831v3.

\bibitem{[Me]} 
E. Meissner, {\it \"Uber Punktmengen konstanter Breite}, Vjschr. Naturforsch. Ges. Z\"urich \textbf{56} (1911), 42--50.

\end{thebibliography}
\end{document}